\theoremstyle{plain}
\newtheorem{theorem}{Theorem}[section]
\newtheorem{corollary}[theorem]{Corollary}
\newtheorem{def-thm}[theorem]{Definition-Theorem}
\newtheorem{lemma}[theorem]{Lemma}
\theoremstyle{definition}
\newtheorem{remark}[theorem]{Remark}
\newcommand{\PP}{\mathbb{P}}
\newcommand{\RR}{\mathbb{R}}
\newcommand{\CC}{\mathbb{C}}
\newcommand{\FF}{\mathbb{F}}
\newcommand{\OO}{{\mathcal O}}
\DeclareMathOperator{\vol}{vol}
\begin{document}

\title[Optimal pinching for the holomorphic sectional curvature]{Optimal pinching for the holomorphic sectional curvature of Hitchin's metrics on Hirzebruch surfaces}

\begin{abstract}
The main result of this note is that, for each $n\in \{1,2,3,\ldots\}$, there exists a Hodge metric on the $n$-th Hirzebruch surface whose positive holomorphic sectional curvature is $\frac{1}{(1+2n)^2}$-pinched. The type of metric under consideration was first studied by Hitchin in this context. In order to address the case $n=0$, we prove a general result on the pinching of the holomorphic sectional curvature of the product metric on the product of two Hermitian manifolds $M$ and $N$ of positive holomorphic sectional curvature.
\end{abstract}

\author{Angelynn Alvarez, Ananya Chaturvedi, Gordon Heier}

\address{Department of Mathematics\\University of Houston\\4800 Calhoun Road, Houston, TX 77204\\USA}
\email{aalvarez@math.uh.edu}
\email{ananya@math.uh.edu}
\email{heier@math.uh.edu}
\subjclass[2010]{14J26, 32Q10, 53C55}

\thanks{The third named author is partially supported by the National Security Agency under Grant Number H98230-12-1-0235. The United States Government is authorized to reproduce and distribute reprints notwithstanding any copyright notation herein.}

\maketitle

\section{Introduction}
It is a well-known fact that the Fubini-Study metric on a complex projective space of arbitrary dimension has constant holomorphic sectional curvature equal to $4$. However, in general, few examples are known of compact complex manifolds which carry a Hermitian metric of positive holomorphic sectional curvature, let alone a Hermitian metric with positively pinched holomorphic sectional curvature. A notable exception form the irreducible Hermitian symmetric spaces of compact type, whose pinching constants for the holomorphic sectional curvature are listed in \cite[Table I]{Chen_table} (see also the references in that paper). In particular, the geometry and curvature of fibrations and even fiber bundles are poorly understood in this respect.\par
In this note, we are primarily interested in the Hirzebruch surfaces $\FF_n=\PP(\OO_{\PP^1}(n) \oplus  \OO_{\PP^1})$, $n\in \{0,1,2,\ldots\}$. It was proven by Hitchin in \cite{Hitchin} that they do carry a natural metric of positive holomorphic sectional curvature, but his proof does not yield any pinching constants. Even this nonquantitative positivity result may be considered to be somewhat surprising, as the $\FF_n$ do not carry metrics of positive Ricci curvature, except when $n=0$, or $n=1$. Our main result is the following pinching theorem for the metrics on $\FF_n$ considered in \cite{Hitchin}, whose definition is recalled in Section \ref{sec_def}.
\begin{theorem}\label{mthm}
Let $\FF_n$, $n\in \{1,2,3,\ldots\}$, be the $n$-th Hirzebruch surface. Then there exists a Hodge metric on $\FF_n$ whose positive holomorphic sectional curvature is $\frac{1}{(1+2n)^2}$-pinched.
\end{theorem}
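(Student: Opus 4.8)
The plan is to carry out a direct computation of the holomorphic sectional curvature of the metric recalled in Section \ref{sec_def} and then to extremize it over all points and all complex directions. First I would fix affine coordinates $(z,w)$ adapted to the fibration $\FF_n\to\PP^1$, with $z$ on the base and $w$ on the fiber, and write the metric in the form $(g_{i\bar j})$, exploiting the fact that it is invariant under the fiberwise circle action and, more strongly, under the lift of the $SU(2)$-action on the base. Since the construction is of momentum/Calabi type, the metric coefficients will depend only on a single real position variable $t$ measuring the position along the fiber $\PP^1$ (equivalently, the normalized fiber norm), with the two endpoints $t=t_\pm$ corresponding to the zero and infinity sections of the bundle. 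This invariance drastically cuts down the number of independent quantities one must track in the curvature tensor.

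Next I would compute the full curvature tensor
\[
R_{i\bar j k\bar l}=-\partial_k\partial_{\bar l}\,g_{i\bar j}+g^{p\bar q}\,(\partial_k g_{i\bar q})(\partial_{\bar l}g_{p\bar j})
\]
and assemble the holomorphic sectional curvature
\[
H(\xi)=\frac{1}{|\xi|_g^4}\sum_{i,j,k,l}R_{i\bar j k\bar l}\,\xi^i\bar\xi^j\xi^k\bar\xi^l
\]
for a general tangent vector $\xi=\xi^1\partial_z+\xi^2\partial_w$. Passing to a unitary frame that diagonalizes $g$ and writing $\xi$ as a unit vector depending on an angular parameter, I would reduce $H$ to an explicit function of the position $t$ and the direction angle. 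The symmetry should force the phase-dependent mixed components to enter only through a single real combination, so that $H$ becomes a ratio of polynomials in $\cos^2$ of the direction angle and in the position variable $t$, with the metric's free scaling/profile parameter appearing as a coefficient.

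With this explicit expression, I would locate $\max H$ and $\min H$ by finding the critical points of $H$ on the compact product of the position interval $[t_-,t_+]$ with the projectivized directions, being careful to include the boundary values at $t=t_\pm$ coming from the two sections. The pinching constant of a given metric in the family is the quotient $\min H/\max H$, and the final step is to optimize this quotient over the admissible parameter. I expect the supremum to be attained at a distinguished value of the parameter for which the extreme values of $H$ are realized along the two sections, yielding exactly $\frac{1}{(1+2n)^2}$; one then checks that the corresponding metric still has positive (not merely nonnegative) holomorphic sectional curvature and lies in an integral Kähler class, so that it is a Hodge metric.

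The main obstacle will be the extremization step: the holomorphic sectional curvature is a genuinely nontrivial rational function of both the fiber position and the direction, and establishing rigorously — rather than numerically — where its global minimum and maximum occur, and how these move as the metric parameter varies, requires careful sign and monotonicity analysis. A secondary difficulty is ensuring that the optimizing parameter does not degrade positivity of $H$ anywhere on $\FF_n$, so that the ratio $\min H/\max H$ remains a well-defined pinching constant, and that the optimal choice can be arranged to keep the Kähler class integral.
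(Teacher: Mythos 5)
Your plan coincides with the paper's strategy: use the lifted $SU(2)$-action to reduce to a single fiber, compute the curvature tensor of $\omega_s$ in coordinates, express the holomorphic sectional curvature as a function of the fiber position and the direction, extremize over both, and then optimize the resulting ratio over the parameter $s$. However, the proposal stops exactly where the proof begins: everything you flag as ``the main obstacle'' is the actual content of the argument, and none of it is carried out. Concretely, what is missing is (a) the explicit curvature components along the fiber $z_1=0$, which reduce $K$ to a quadratic form $\alpha a^2+\beta ab+\gamma b^2$ in $a=|c_1|^2$, $b=|c_2|^2$ with $a+b=1$ and coefficients that are rational functions of $r=|z_2|^2$ and $s$; (b) the case analysis (interior Lagrange critical point versus the endpoints $a=0$ and $b=0$) together with the chain of inequalities showing that for $s<\frac{1}{n^2}$ the global minimum is $\frac{4-4n^2s}{1+s+2ns}$ and the global maximum is $\frac{4}{s}$; and (c) the maximization of $p(s)=\frac{s(1-n^2s)}{1+s+2ns}$ at $s=\frac{1}{2n^2+n}$, which yields $\frac{1}{(1+2n)^2}$ and is rational, hence gives a Hodge metric. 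Without these computations the theorem's specific constant is not reachable, so the proposal as written is an outline rather than a proof.

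There is also a substantive misprediction in where the extrema occur. You expect the extreme values to be ``realized along the two sections,'' i.e., at the boundary fiber positions in directions tangent to the zero and infinity sections. In fact the maximum $\frac{4}{s}$ is attained in the purely \emph{vertical} direction at \emph{every} point of $\FF_n$ (the coefficient $\gamma=\frac{4}{s}$ is independent of $r$), and the minimum is attained only in the limit $r\to\infty$ (along the $-n$-curve $E$) but in a \emph{mixed} direction $(a_0,b_0)$ with $a_0\to\frac{2n}{2n+1}$, $b_0\to\frac{1+n}{1+3n+2n^2}$ --- neither tangent to $E$ nor vertical. An extremization scheme that only checks boundary positions and section-tangent directions would therefore miss the true minimum; one must analyze the interior critical point of the quadratic form in the direction variables, as the paper does via Lagrange multipliers, and compare all three candidate families of values. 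Your concern about preserving positivity is handled automatically once one sees that positivity forces $s<\frac{1}{n^2}$ and the optimizer $\frac{1}{2n^2+n}$ satisfies this bound.
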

We also prove that the numerical values of the pinching constants are optimal in the families of metrics studied by Hitchin. This does however leave open the question if there are other types of metrics on Hirzebruch surfaces with better pinching constants. Recall that an upper bound on the possible value of such pinching constants was given in the paper \cite{bishop_goldberg}, where it was proven that a complete K\"ahler manifold whose positive holomorphic sectional curvature is $c$-pinched with $c>\frac 4 5$ is homotopic to a complex projective space.\par
The proofs work by way of explicit computations, using in particular the method of Lagrange multipliers. Our results can likely be generalized to projectivized vector bundles of higher rank over higher-dimensional bases, but the computations will surely become much more involved, and we will leave this for a later occasion.\par
Since we could not find a reference for it, we also include the following pinching theorem for products $M \times N$ of Hermitian manifolds endowed with the product metric. If $M=N=\PP^1$, then this theorem addresses the case of the $0$-th Hirzebruch surface $\PP^{1}\times \PP^{1}$, which was not handled in Theorem \ref{mthm}. In this case, $c_M=c_N=c_{\PP^1}=1$, $k=4$, and $\frac{c_Mc_N}{c_M+c_N}=\frac 1 2$.
\begin{theorem}\label{prod_thm}
Let $M$ and $N$ be Hermitian manifolds whose positive holomorphic sectional curvatures are $c_M$- and $c_N$-pinched respectively and satisfy
$$kc_M\leq K_M \leq k\ \ \text{and}\ \ kc_N\leq K_N \leq k$$
for a constant $k>0$. Then the holomorphic sectional curvature $K$ of the product metric on $M\times N$ satisfies
$$k\frac{c_Mc_N}{c_M+c_N}\leq K \leq k$$
and is $\frac{c_Mc_N}{c_M+c_N}$-pinched.
\end{theorem}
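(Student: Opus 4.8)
The plan is to reduce the statement to a single-variable optimization. The first step is to record the behavior of the holomorphic sectional curvature under the product construction. Writing a tangent vector to $M \times N$ at a point as $\xi = (u,v)$ with $u$ tangent to $M$ and $v$ tangent to $N$, the curvature tensor of the product metric is the direct sum of the curvature tensors of the two factors, so all mixed components vanish. Consequently, setting $a = |u|^2$ and $b = |v|^2$, the holomorphic sectional curvature in the direction $\xi$ takes the additive form
$$K(\xi) = \frac{K_M(u)\,a^2 + K_N(v)\,b^2}{(a+b)^2},$$
where $K_M(u)$ and $K_N(v)$ denote the holomorphic sectional curvatures of the factors in the directions $u$ and $v$ (a term in which $u$ or $v$ vanishes being interpreted as $0$). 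Since $K$ is invariant under scaling of $\xi$, I would normalize to $a + b = 1$ and regard $K$ as a function of the three independent quantities $K_M(u) \in [kc_M, k]$, $K_N(v) \in [kc_N, k]$, and $a \in [0,1]$, with $b = 1-a$.

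For the upper bound I would use $K_M(u), K_N(v) \leq k$ to obtain $K(\xi) \leq k\bigl(a^2 + (1-a)^2\bigr) \leq k$, the last inequality holding because $a^2 + (1-a)^2 \leq (a+(1-a))^2 = 1$ for $a \in [0,1]$. Equality is attained when, say, $a = 1$ and $u$ points in a direction realizing $K_M = k$, so $\max K = k$.

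For the lower bound I would use $K_M(u) \geq kc_M$ and $K_N(v) \geq kc_N$, reducing matters to minimizing $f(a) = c_M a^2 + c_N(1-a)^2$ over $a \in [0,1]$. This is precisely where the method of Lagrange multipliers (or elementary single-variable calculus) enters: the critical point is $a^{*} = c_N/(c_M + c_N)$, at which $f(a^{*}) = c_M c_N/(c_M + c_N)$, giving $K(\xi) \geq k\,c_M c_N/(c_M + c_N)$. The minimization decouples cleanly because the three variables are independent, so the bound is sharp: choosing $u$ and $v$ to realize $K_M = kc_M$ and $K_N = kc_N$ together with $a = a^{*}$ attains it, whence $\min K = k\,c_M c_N/(c_M + c_N)$.

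Combining the two estimates yields $k\,c_M c_N/(c_M + c_N) \leq K \leq k$, and since both the maximum $k$ and the minimum $k\,c_M c_N/(c_M + c_N)$ are genuinely attained, the pinching constant equals their ratio $c_M c_N/(c_M + c_N)$. I expect no serious obstacle here; the only points requiring care are the justification that the curvature tensor of the product metric splits as a direct sum, so that $K$ has the displayed additive form, and the bookkeeping confirming that both extreme values are achieved rather than merely bounded.
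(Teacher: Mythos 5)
Your proposal is correct and follows essentially the same route as the paper: the curvature tensor of the product metric splits as a direct sum, giving $K(\xi)=K_M\,y^2+K_N\,(1-y)^2$ with $y$ the squared norm of the $M$-component of a unit vector, after which the bounds reduce to minimizing $c_My^2+c_N(1-y)^2$ and maximizing $y^2+(1-y)^2$ on $[0,1]$. The critical value $y^{*}=c_N/(c_M+c_N)$ and the resulting constants match the paper exactly.
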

Recall that the Hopf Conjecture states that the product of two real two-spheres does not admit a Riemannian metric of positive sectional curvature, so even the case of products as in Theorem \ref{prod_thm} is not trivial with respect to sectional curvatures. \par
This paper is organized as follows. In Section \ref{sec_def}, we will recall fundamental definitions and establish our basic setup. In Section \ref{proof_mthm}, we will prove Theorem \ref{mthm} and also derive a corollary giving lower and upper bounds for the scalar curvature of the metrics under investigation. In Section 4, we will give an interpretation of the results of our computations in terms of the geometry of Hirzebruch surfaces. In Section \ref{proof_prod_thm}, we will prove Theorem \ref{prod_thm}.
 \section{Basic definitions and description of the family of metrics under consideration}\label{sec_def}
Let $M$ be an $m$-dimensional manifold with local coordinates $z_1,\ldots,z_m$. Let
\begin{equation*}
g=\sum_{i,j=1}^m g_{i\bar j} dz_i\otimes d\bar{z}_j
\end{equation*}
be a Hermitian metric on $M$. Under the usual abuse of terminology, we will alternatively refer to the associated (1,1)-form $\omega=\frac{\sqrt{-1}}{2}\sum_{i,j=1}^m g_{i\bar{j}} dz_i\wedge d\bar{z}_{j}$ as the metric on $M$. The metric is called {\it K\"ahler} if $\omega$ is $d$-closed. It is called {\it Hodge} if it is K\"ahler and the cohomology class of $\omega$ is rational.\par
The components $R_{i\bar j k \bar l}$ of the curvature tensor $R$ associated with the metric connection are locally given by the formula
\begin{equation}\label{curv_formula}
R_{i\bar j k \bar l}=-\frac{\partial^2 g_{i\bar j}}{\partial z_k\partial \bar z _l}+\sum_{p,q=1}^m g^{p\bar q}\frac{\partial g_{i\bar p}}{\partial z_k}\frac{\partial g_{q\bar j}}{\partial \bar z_l}.
\end{equation}\par
If $\xi=\sum_{i=1}^m\xi_i \frac{\partial }{\partial z_i}$ is a non-zero complex tangent vector at $p\in M$, then the {\it holomorphic sectional curvature} $K(\xi)$ is given by
\begin{equation}\label{hol_sect_curv_def}
K(\xi)=\left( 2 \sum_{i,j,k,l=1}^m R_{i\bar j k \bar l}(p)\xi_i\bar\xi_j\xi_k\bar \xi_l\right) / \left(\sum_{i,j,k,l=1}^m g_{i\bar j}g_{k\bar l} \xi_i\bar\xi_j\xi_k\bar \xi_l\right).
\end{equation}
Note that the holomorphic sectional curvature of $\xi$ is clearly invariant under multiplication of $\xi$ with a real non-zero scalar, and it thus suffices to consider unit vectors, for which the value of the denominator is $1$. For a constant $c\in (0,1]$, we say that the (positive) holomorphic sectional curvature is $c$-{\it pinched} if
$$(1\geq)\ \frac{\inf_\xi K(\xi)}{\sup_\xi K(\xi)} = c,$$
where the infimum and supremum are taken over all non-zero (or unit) tangent vectors across the entire manifold. In the case of a compact manifold, the infimum and supremum become a minimum and maximum, respectively, due to compactness.\par
Moreover, it is a basic fact that the holomorphic sectional curvature of a K\"ahler metric completely determines the curvature tensor $R_{i\bar j k \bar l}$ (\cite[Proposition 7.1, p.\ 166]{kobayashi_nomizu_ii}). However, as we remarked in the introduction, positivity or negativity properties of the holomorphic sectional curvature of a K\"ahler metric do not necessarily transfer to the {\it Ricci curvature} $R_{i\bar j}$, which is defined as the following trace of the curvature tensor:
$$R_{i\bar j}= \sum_{k,l=1}^m g^{k\bar l}R_{i\bar j k \bar l}.$$
Nevertheless, there is a beautiful integral formula due to Berger (see Lemma \ref{berger_lemma}) which expresses the {\it scalar curvature} $\tau$ of a K\"ahler metric as an integral of the holomorphic sectional curvature, while the standard definition is as the trace of the Ricci curvature:
$$\tau = \sum_{i,j=1}^m g^{i\bar j} R_{i\bar j}= \sum_{i,j,k,l=1}^m g^{i\bar j} g^{k\bar l}R_{i\bar j k \bar l}.$$\par
Following Hitchin's idea from \cite{Hitchin}, we recall that on the $n$-th Hirzebruch surface $\FF_{n}$, there are natural Hermitian metrics defined as follows. Note that these metrics are clearly K\"ahler and, when the value of the parameter $s$ is rational, even Hodge. \par
If $z_1$ is an inhomogeneous coordinate on an open subset of the base space $\PP^1$, then a point
$$w \in\OO_{\PP^1}(n)\oplus \OO_{\PP^1}$$
can be represented by coordinates $w_1,w_{2}$ in the fiber direction as
$$w=(z_1,w_1(dz_1)^{-n/2},w_{2}),$$
where $(dz_1)^{-1}$ is to be understood as a section of $T\PP^1 =  \OO_{\PP^1}(2)$.
After the projectivization, each fiber carries the inhomogeneous coordinate $z_2= w_2/w_1$. For a positive real number $s$, the metric
$$\omega_s = \frac{\sqrt{-1}}{2}\partial \bar \partial (\log (1+z_1\bar z_1) + s\log ((1+z_1\bar z_1)^n+z_2\bar z_2))$$
is globally well-defined on $\FF_{n}$. It is this metric for which we compute the holomorphic sectional curvature pinching. We also find the choice of $s$ with the optimal value of the pinching constant in the family of metrics parametrized by $s$.
\begin{remark}
In \cite{Hitchin}, the curvature tensor is expressed in terms of a local unitary frame field. In this note, we prefer to work in terms of the frame field $\frac{\partial}{\partial z_1}, \frac{\partial}{\partial z_2}$ with respect to the coordinates discussed above, as it seems to lend itself better to our method. \par
\end{remark}
\section{Proof of Theorem \ref{mthm}}\label{proof_mthm}
\subsection{The case $n\ge 2$}
As observed in \cite{Hitchin}, the fact that $SU(2)$ acts transitively on $\PP^1$ as isometries of the Fubini-Study metric and that this action lifts to $\OO_{\PP^1}(n)\oplus \OO_{\PP^1}$, implies that we can restrict ourselves to computing the curvature along one fiber, say the one given by $z_1=0$.
The metric tensor associated to $\omega_s$ along this fiber is
$$(g_{i \bar j}) =  \left( \begin{array}{cc} \frac{1+z_2\bar{z}_2+sn}{1+z_2\bar{z}_2} & 0\\ 0 & \frac{s}{(1+z_2\bar{z}_2)^2} \end{array} \right).$$
From this, we see that an orthonormal basis for $T_{(0, z_2)}\mathbb{F}_n$ is given by the two vectors
$$\sqrt{\frac{1+z_2\bar{z}_2}{1+z_2\bar{z}_2+ns}} \cdot \frac{\partial}{\partial z_1}\ \text{ and }\ \frac{1+z_2\bar{z}_2}{\sqrt{s}} \cdot \frac{\partial}{\partial z_2}.$$
Therefore, an arbitrary unit tangent vector $\xi \in T_{(0, z_2)} \mathbb{F}_n$ can be written as
$$\xi = c_1 \sqrt{\frac{1+z_2\bar{z}_2}{1+z_2\bar{z}_2+ns}} \cdot \frac{\partial}{\partial z_1} + c_2 \frac{1+z_2\bar{z}_2}{\sqrt{s}} \cdot \frac{\partial}{\partial z_2},$$
where $c_1, c_2 \in \CC$ are such that $|c_1|^2+|c_2|^2=1$.
Let $\xi_1:=  c_1 \sqrt{\frac{1+z_2\bar{z}_2}{1+z_2\bar{z}_2+ns}}$ and $\xi_2:=c_2 \frac{1+z_2\bar{z}_2}{\sqrt{s}}$.
Based on the formula \eqref{curv_formula} in Section \ref{sec_def}, the components of the curvature tensor are
\begin{align*}
R_{1 \bar 1 1 \bar 1} & = \frac{2(-n^2sz_2\bar{z}_2+(1+z_2\bar{z}_2)^2+n(s+sz_2\bar{z}_2))}{(1+z_2\bar{z_2})^2},\\
R_{1 \bar 1 2 \bar 2} & =\frac{ns(1+ns-z_2^2\bar{z}_2^2)}{(1+z_2\bar{z}_2)^3(1+ns+z_2\bar{z}_2)},\\
R_{2 \bar 2 2 \bar 2} & =\frac{2s}{(1+z_2\bar{z}_2)^4},
\end{align*}
while the other terms (except those obtained from symmetry) are zero. Substituting the components and values of $\xi_1$ and $\xi_2$ into the definition \eqref{hol_sect_curv_def} of holomorphic sectional curvature in the direction of $\xi$ gives us
\begin{align*}  K(\xi) & =  2 \sum_{i,j,k,l=1}^2 R_{i \bar j k \bar l} \xi_i \bar{\xi}_j \xi_k \bar{\xi}_l\\
& =  2R_{1 \bar 1 1 \bar 1} \xi_1 \bar{\xi}_1 \xi_1 \bar{\xi}_1 + 8R_{1 \bar 1 2 \bar 2} \xi_1 \bar{\xi}_1 \xi_2 \bar{\xi}_2 + 2R_{2 \bar 2 2 \bar 2} \xi_2 \bar{\xi}_2 \xi_2 \bar{\xi}_2\\
& =  \frac{4((1+z_2\bar{z}_2)^2+ns(1+z_2\bar{z}_2 - nz_2\bar{z}_2))}{(1+z_2\bar{z}_2+ns)^2}|c_1|^4 \\
&\quad + \frac{8n(1+ns-z_2^2\bar{z}_2^2)}{(1+z_2\bar{z}_2+ns)^2}|c_1|^2|c_2|^2+ \frac{4}{s}|c_2|^2.
\end{align*}
Since the above expression only depends on the modulus squared of $z_2$, we let $r:= z_2\bar{z}_2$. Also, we let $a:= |c_1|^2$ and $b:=|c_2|^2$, satisfying $a+b=1$ and $a,b\in [0,1]$. Hence, for fixed values of $r$ and $s$, the holomorphic sectional curvature takes the form of a degree two homogeneous polynomial in $a$ and $b$ with real coefficients:
\begin{equation}\label{hol_sect_curv_simplified}
K_{r,s}(a,b)= \frac{4((1+r)^2+ns(1+r - nr))}{(1+r+ns)^2}a^2 + \frac{8n(1+ns-r^2)}{(1+r+ns)^2}ab+ \frac{4}{s}b^2.
\end{equation}
We write $\alpha:=   \frac{4((1+r)^2+ns(1+r - nr))}{(1+r+ns)^2},\ \beta:=\frac{8n(1+ns-r^2)}{(1+r+ns)^2}$, and $\gamma:=  \frac{4}{s}$ for the coefficients.\par
In order to find the pinching constant for the metric $\omega_s$, we need to minimize and maximize
\begin{equation*}
K_{r,s}(a,b)= \alpha a^2+ \beta ab + \gamma b^2
\end{equation*}
for fixed $s$, subject to the constraint $a+b-1=0$. To do so, we first also fix $r$ and set up the Lagrange Multiplier equations:
\begin{align*}
\frac{\partial}{\partial a} K_{r,s}(a,b) = \lambda,\ \frac{\partial}{\partial b} K_{r,s}(a,b) = \lambda,\ a+b-1 = 0.
\end{align*}
Solving this system of equations for $a,b$ yields a unique stationary solution
\begin{align*}
a_0 &=\frac{2\gamma-\beta}{2(\gamma-\beta+\alpha)} = \frac{(1+r)(1+ns)}{1+s-(-1+n)ns^2+r(1+s+2ns)},\\
b_0 &= \frac{2\alpha-\beta}{2(\gamma-\beta+\alpha)} = \frac{s(1-n+r+nr+ns-n^2s)}{1+s-(-1+n)ns^2+r(1+s+2ns)}.
\end{align*}
Substituting these values into equation \eqref{hol_sect_curv_simplified} gives us
\begin{align*}
& K_{r,s}(a_0,b_0) \\ = &\ 4 \cdot \frac{3r^2(1+ns)+3r(1+ns)^2-r^3(-1+n^2s)-(1+ns)^2(-1-ns+n^2s)}{(1+r+ns)^2(1+s-(-1+n)ns^2+r(1+s+2ns))}.
\end{align*}
We shall now find lower and upper bounds for the holomorphic sectional curvature in the following three cases:
\begin{enumerate}
  \item \emph{For $a = a_0$ and $b = b_0$:} For a fixed value of $s$, define $f_s : [0,\infty) \to \RR$ as
        \[f_s(r):=K_{r,s}(a_0,b_0).\]
        A computation yields that $f_s'(r)=0$ if and only if $r=-1 \notin (0,\infty)$ (which we may disregard) or
        \[r=r_0:=\frac{(n-1)(1+ns)}{1+n},\]
        which is in $(0,\infty)$ under the assumption $n  \geq 2$. Note
        \[f_s(r_0) = \frac{4-s(n-1)^2}{1+ns}.\]
        At the endpoints of the interval $[0,\infty)$, we see that
        \[f_s(0)=\frac{4(1+ns-n^2s)}{1+s-(n-1)ns^2}, \quad \text{and} \quad \lim_{r\to\infty} f_s(r)=\frac{4-4n^2s}{1+s+2ns}.\]
        The latter expression makes it clear that we need to choose $s < \frac 1 {n^2}$ in order to obtain positive holomorphic sectional curvature.   Furthermore, for $s < \frac 1 {n^2}$,
        \[\frac{4(1+ns-n^2s)}{1+s-(n-1)ns^2}-\frac{4-4n^2s}{1+s+2ns}=\frac{4s(3n-s(2n^3-3n^2)-s^2(n^4-n^3))}{(1+s+2ns)(1+s(1-s(n^2-n)))} >0,\]
 and        \[\frac{4-s(n-1)^2}{1+ns} - \frac{4(1+ns-n^2s)}{1+s-(n-1)ns^2} = -\frac{s(n-1)^2(3+s(n-1))}{(-1+s(n-1))(1+ns)}>0.\]
        Thus,
        \[\frac{4-s(n-1)^2}{1+ns} > \frac{4(1+ns-n^2s)}{1+s-(n-1)ns^2} > \frac{4-4n^2s}{1+s+2ns}.\]
  \item \emph{For $a=0$ and $b=1$:} The curvature value is $K_{r,s}(0,1)=\frac{4}{s}$, which is independent of $r$.
  \item \emph{For $a=1$ and $b=0$:} The curvature value is
        \[h_s(r):=K_{r,s}(1,0)=\frac{4((1+r)^2+ns(1+r-nr))}{(1+r+ns)^2}.\]
        In the interval $(0, \infty)$, we have that $h_s'(r)=0$ if and only if
        \[r=r_0=\frac{(n-1)(1+ns)}{1+n} \quad(\in (0, \infty)\ \text{when}\ n\geq 2),\]
        with
        \[h_s(r_0) = \frac{4-s(n-1)^2}{1+ns}.\]
        Note that this is the same $r_0$ as above, although we see no clear geometric reason for this coincidence. At the endpoints, we have
        \[h_s(0)=\frac{4}{1+ns}, \quad \text{and} \lim_{r\to\infty} h_s(r)=4.\]
        Clearly, we have
        \[4 > \frac{4}{1+ns} > \frac{4-s(n-1)^2}{1+ns}.\]
\end{enumerate}
Combining the three cases above, we have for $n \geq 2$:
\[\frac{4}{s} > 4 > \frac{4}{1+ns} > \frac{4-s(n-1)^2}{1+ns} > \frac{4(1+ns-n^2s)}{1+s-(n-1)ns^2} > \frac{4-4n^2s}{1+s+2ns}.\]
Hence, the smallest and largest values attained by the holomorphic sectional curvature are
\[\lim_{r\to\infty} f_s(r)=\frac{4-4n^2s}{1+s+2ns} \quad \text{and}\quad  \frac{4}{s},\]
respectively.\par
Finally, in order to find the value of $s$ with the best pinching constant, we define a function
$$p : (0,\frac 1 {n^2}) \to (0,1),\ p(s) := \frac{\min_\xi K_s(\xi)}{\max_\xi K_s(\xi)} = \frac{\frac{4-4n^2s}{1+s+2ns}}{\frac{4}{s}}=\frac{s(1-n^2s)}{1+s+2ns},$$
where the minimum and maximum are taken over all non-zero (or unit) tangent vectors across the entire manifold and the index $s$ indicates that the holomorphic sectional curvature is computed with respect to the metric with the parameter value $s$. This is the function which we want to maximize. We see that $p'(s)=0$ if and only if $s=-\frac{1}{n} \notin (0,\frac 1 {n^2})$ or $s=\frac{1}{2n^2+n}\in (0,\frac 1 {n^2})$. Elementary calculus tells us that $p$ has a global maximum at $\frac{1}{2n^2+n}$. Hence, with $s=\frac{1}{2n^2+n}$, we get the optimal pinching of 
$$p\left(\frac{1}{2n^2+n}\right)=\frac{1}{(1+2n)^2}.$$

\subsection{The case $n=1$}
In the case when $n=1$, the functions $f_s$ and $h_s$ have their stationary points at the boundary point $r=0$. However, our reasoning still goes through almost verbatim and yields the expected pinching constant $\frac 1 9$ for $s= \frac 1 3$.

\subsection{A remark on scalar curvature}
The following formula due to \cite[Lemme 7.4]{berger} expresses the scalar curvature of a K\"ahler manifold as an integral of the holomorphic sectional curvature.
\begin{lemma}\label{berger_lemma}
Let $M$ be an $m$-dimensional K\"ahler manifold. Then the scalar curvature $\tau$ satisfies at every point $P\in M$:
$$\tau(P)=\frac{m(m+1)}{4\vol(S_P^{2m-1})}\int_{\xi\in S_P^{2m-1}}K(\xi) d\xi,$$
where $S_P^{2m-1}$ denotes the unit sphere inside the tangent space $T_P M$ with respect to the metric, and $d\xi$ is the measure on $S_P^{2m-1}$ induced by the metric.
\end{lemma}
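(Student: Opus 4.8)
The plan is to fix the point $P$ and choose a local unitary frame so that $g_{i\bar j}(P)=\delta_{ij}$. With this normalization the denominator in \eqref{hol_sect_curv_def} equals $1$ for every unit vector, so $K(\xi)=2\sum_{i,j,k,l}R_{i\bar j k\bar l}\xi_i\bar\xi_j\xi_k\bar\xi_l$, while the scalar curvature reduces to $\tau(P)=\sum_{i,k}R_{i\bar i k\bar k}$. Since the curvature tensor is constant along the sphere $S_P^{2m-1}$, I would interchange the finite sum with the integral and thereby reduce the entire statement to evaluating the fourth-order moments
\[
I_{ijkl}:=\int_{S_P^{2m-1}}\xi_i\bar\xi_j\xi_k\bar\xi_l\, d\xi.
\]

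The next step is to compute these moments by symmetry. The torus action $(\theta_1,\dots,\theta_m)\mapsto(e^{\sqrt{-1}\theta_1}\xi_1,\dots,e^{\sqrt{-1}\theta_m}\xi_m)$ preserves $S_P^{2m-1}$ together with its measure and multiplies the integrand by $e^{\sqrt{-1}(\theta_i-\theta_j+\theta_k-\theta_l)}$; hence $I_{ijkl}=0$ unless the multiset $\{i,k\}$ equals $\{j,l\}$. The surviving contributions are therefore governed by the two numbers $A:=\int_{S_P^{2m-1}}|\xi_i|^4\, d\xi$ and $B:=\int_{S_P^{2m-1}}|\xi_i|^2|\xi_k|^2\, d\xi$ (for $i\neq k$), each independent of the chosen indices by the $U(m)$-invariance of the round sphere. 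Collecting terms, the diagonal case $i=j=k=l$ produces $A\sum_i R_{i\bar i i\bar i}$, while the two matchings $(j,l)=(i,k)$ and $(j,l)=(k,i)$ with $i\neq k$ produce $B\sum_{i\neq k}R_{i\bar i k\bar k}$ and $B\sum_{i\neq k}R_{i\bar k k\bar i}$, respectively. Here the K\"ahler symmetry $R_{i\bar j k\bar l}=R_{i\bar l k\bar j}$ gives $R_{i\bar k k\bar i}=R_{i\bar i k\bar k}$, so the two off-diagonal sums coincide, and
\[
\int_{S_P^{2m-1}}K(\xi)\, d\xi = 2\Big(A\sum_i R_{i\bar i i\bar i}+2B\sum_{i\neq k}R_{i\bar i k\bar k}\Big).
\]

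The crucial quantitative input is the ratio $A/B$, and I expect this to be the main obstacle: the one obvious relation $\sum_i|\xi_i|^2\equiv 1$ only yields $mA+m(m-1)B=\vol(S_P^{2m-1})$, which does not by itself determine the ratio. To pin it down I would represent the uniform measure on the sphere via i.i.d.\ standard complex Gaussians $\zeta_1,\dots,\zeta_m$ by setting $\xi=\zeta/|\zeta|$; since $|\zeta|$ is independent of the direction, $A$ and $B$ are obtained by dividing the Gaussian moments $E|\zeta_i|^4=2$ and $E(|\zeta_i|^2|\zeta_k|^2)=1$ (for $i\neq k$) by $E|\zeta|^4=m(m+1)$. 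This yields $A=2B$, explicitly $A=\tfrac{2}{m(m+1)}\vol(S_P^{2m-1})$ and $B=\tfrac{1}{m(m+1)}\vol(S_P^{2m-1})$.

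Substituting $A=2B$ collapses the bracketed expression to $2B\sum_{i,k}R_{i\bar i k\bar k}=2B\,\tau(P)$, so that $\int_{S_P^{2m-1}} K(\xi)\, d\xi = 4B\,\tau(P)=\tfrac{4}{m(m+1)}\vol(S_P^{2m-1})\,\tau(P)$, which rearranges to the asserted identity. As a consistency check one can verify both sides on a space of constant holomorphic sectional curvature, where $R_{i\bar j k\bar l}$ is a scalar multiple of $g_{i\bar j}g_{k\bar l}+g_{i\bar l}g_{k\bar j}$; this confirms the numerical factor and, in particular, the relation $A=2B$ driving it.
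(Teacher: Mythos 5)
Your proof is correct, but there is nothing in the paper to compare it against: the paper does not prove this lemma at all, it simply quotes it from Berger \cite[Lemme 7.4]{berger}. So you have supplied a complete argument where the authors give only a citation. The key steps all check out against the paper's conventions: in a unitary frame at $P$ the denominator of \eqref{hol_sect_curv_def} is $1$ on the unit sphere and $\tau(P)=\sum_{i,k}R_{i\bar i k\bar k}$; the torus-invariance argument correctly kills all moments except those with $\{i,k\}=\{j,l\}$; the Gaussian computation gives $A=\tfrac{2}{m(m+1)}\vol(S_P^{2m-1})$ and $B=\tfrac{1}{m(m+1)}\vol(S_P^{2m-1})$, hence $A=2B$ (this is where the K\"ahler hypothesis enters a second time, via the symmetry $R_{i\bar k k\bar i}=R_{i\bar i k\bar k}$, which you correctly invoke and which is visible from \eqref{curv_formula} for a K\"ahler metric); and the resulting identity $\int K\,d\xi=\tfrac{4}{m(m+1)}\vol(S_P^{2m-1})\,\tau(P)$ matches the stated constant, consistently with the factor $2$ built into the paper's definition of $K$. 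Your constant-curvature sanity check also comes out right ($K\equiv 4c$ versus $\tau=cm(m+1)$). This is a clean, self-contained replacement for the external reference.
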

This lemma yields the following corollary.
\begin{corollary}
Let $\tau_s$ denote the scalar curvature of  $\FF_n$, $n\in \{1,2,3,\ldots\}$, pertaining to the metric $\omega_s$. Then
$$\frac 3 2 \min_\xi K_s(\xi)=\frac 3 2 \cdot \frac{4-4n^2s}{1+s+2ns}  \leq \tau_s \leq \frac 3 2\cdot \frac 4 s =  \frac 3 2  \max_\xi K_s(\xi).$$
In particular, for our optimal choice of $s=\frac{1}{2n^2+n}$, we have
$$\frac{6n(n+1)}{2n^2+3n+1} \leq \tau \leq 12n^2+6n.$$
\end{corollary}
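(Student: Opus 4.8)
The plan is to apply Berger's integral formula (Lemma \ref{berger_lemma}) with $m=2$ to convert the bounds on the holomorphic sectional curvature $K_s(\xi)$ into bounds on the scalar curvature $\tau_s$. First I would specialize the lemma to the surface case: with $m=2$, the prefactor becomes $\frac{m(m+1)}{4} = \frac{2\cdot 3}{4} = \frac{3}{2}$, so that
\begin{equation*}
\tau_s(P) = \frac{3}{2}\cdot\frac{1}{\vol(S_P^3)}\int_{\xi\in S_P^3}K_s(\xi)\,d\xi,
\end{equation*}
which exhibits $\tau_s(P)/\tfrac32$ as the average value of $K_s$ over the unit sphere in the tangent space at $P$.

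From this averaged representation, the two-sided estimate is immediate by monotonicity of the integral. Since we established in the proof of Theorem \ref{mthm} that, at every point and for every unit tangent vector $\xi$,
\begin{equation*}
\min_\xi K_s(\xi) = \frac{4-4n^2s}{1+s+2ns} \leq K_s(\xi) \leq \frac{4}{s} = \max_\xi K_s(\xi),
\end{equation*}
integrating these constant bounds against the normalized measure $\frac{1}{\vol(S_P^3)}\,d\xi$ (whose total mass is $1$) and multiplying by $\frac32$ yields exactly
\begin{equation*}
\frac{3}{2}\cdot\frac{4-4n^2s}{1+s+2ns}\leq\tau_s\leq\frac{3}{2}\cdot\frac{4}{s},
\end{equation*}
which is the first claimed inequality. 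I would note that because these bounds hold uniformly over the whole manifold (the $SU(2)$-symmetry reduces everything to one fiber, and the extrema were computed there), the resulting estimate on $\tau_s$ is valid at every point $P\in\FF_n$.

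For the second displayed inequality, I would simply substitute the optimal parameter value $s=\frac{1}{2n^2+n}=\frac{1}{n(2n+1)}$ into both bounds and simplify. The upper bound gives $\frac{3}{2}\cdot\frac{4}{s}=6\cdot n(2n+1)=12n^2+6n$. For the lower bound, plugging $s=\frac{1}{n(2n+1)}$ into $\frac{3}{2}\cdot\frac{4-4n^2s}{1+s+2ns}$ and clearing denominators should collapse to $\frac{6n(n+1)}{2n^2+3n+1}$; this is the only genuine computation, and since the factor $\frac{1}{(1+2n)^2}$ is precisely the pinching constant from Theorem \ref{mthm}, the ratio of the lower to upper scalar-curvature bound will reflect that same pinching. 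I do not anticipate a real obstacle here: the substance is entirely contained in Berger's formula together with the curvature extrema already determined, and the remaining work is elementary algebraic simplification.
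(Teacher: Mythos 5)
Your proposal is correct and follows exactly the paper's own argument: apply Berger's formula with $m=2$ to get the prefactor $\frac{3}{2}$, bound the averaged integrand by the previously computed minimum and maximum of $K_s$, and then substitute $s=\frac{1}{2n^2+n}$. The final algebraic simplification indeed collapses to $\frac{6n}{2n+1}=\frac{6n(n+1)}{2n^2+3n+1}$ as claimed.
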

\begin{proof}
The proof is immediate from Lemma \ref{berger_lemma} and the bounds for the holomorphic sectional curvature: Replace the integrand $K_s(\xi)$ by the minimum and maximum, respectively, which we computed, move the constant in front of the integral, cancel $\vol(S_P^{2m-1})$, and let $m=2$.
\end{proof}
Finally, since the scalar curvature is additive in products equipped with the product metric, and since the scalar curvature of $\PP^1$ with the Fubini-Study metric is constant and equal to $2$, it is immediately clear that the scalar curvature of $\FF_0=\PP^{1}\times \PP^{1}$ is constant and equal to $4$.
\section{Geometric interpretation of our computations}
The Hirzebruch surfaces have a beautiful geometric structure, which is very nicely explained in \cite[pp.\ 517--520]{GH}. In particular, on the $n$-th Hirzebruch surface, there is a unique non-singular rational curve $E$ ``at infinity" which has self-intersection number $-n$. In terms of our coordinates $z_1,z_2$, the curve $E$ is given by $z_2 = \infty$.  The fact that
$$\min_\xi K_s(\xi)= \lim_{r\to\infty} f_s(r)$$
means that the smallest value of the holomorphic sectional curvature for each $\omega_s$ is attained at a tangent vector attached to a point of $E$. Note that because of the transitivity of the $SU(2)$ action, this is then true for all points of $E$.  Since the largest value $\frac 4 s$ is attained inside every tangent space of $\FF_n$, every point $P\in E$ has the property that the tangent space to $\FF_n$ at $P$ contains a vector giving the lowest possible holomorphic sectional curvature and a vector giving the highest possible holomorphic sectional curvature. In other words, for Hirzebruch surfaces, the notion of the ``pinching constant" and the ``pointwise pinching constant" are one and the same. \par
We can still say more about the vectors yielding the extreme values. If we consider $a_0$ and $b_0$ as functions of $r$ and set $s=\frac{1}{2n^2+n}$, then
$$\lim_{r\to \infty} a_0 = \frac{1+ns}{1+s+2ns}=\frac{2n}{2n+1},\ \text{and } \lim_{r\to \infty} b_0 = \frac{s(1+n)}{1+s+2ns}=\frac{1+n}{1+3n+2n^2}.$$
For large values of $n$, the first value is a little less than $1$, and the second value is a little larger than $0$. This means that the direction of the tangent vector giving the smallest value of the holomorphic sectional curvature is close, but not equal, to the direction of the tangent space of $E$, which we think of as the ``horizontal" direction. Moreover, the direction of the tangent vector giving the largest value of the holomorphic sectional curvature is exactly ``vertical" and thus almost, but not exactly, perpendicular to the direction giving the smallest value.

\section{Proof of Theorem \ref{prod_thm}}\label{proof_prod_thm}
The proof of Theorem \ref{prod_thm} consists of computing the holomorphic sectional curvature of the product metric on the product $M^m\times N^n$, $m,n\in\{1,2,3,\ldots\}$, of two Hermitian manifolds with local coordinates $z_1,\ldots,z_m$ and $z_{m+1},\ldots,z_{m+n}$ around points $P \in M$ and $Q \in N$, respectively. Let
$g=\sum_{i,j=1}^{m} g_{i \bar j} dz_i \otimes d\bar z_j$, and $h=\sum_{i,j=m+1}^{m+n} h_{i \bar j} dz_i \otimes d\bar z_j$ be Hermitian metrics on $M$ and $N$, respectively, with positive holomorphic sectional curvature. Then
$$\sum_{i,j=1}^{m} g_{i \bar j} dz_i \otimes d\bar z_j + \sum_{i,j=m+1}^{m+n} h_{i \bar j} dz_i \otimes d\bar z_j$$
gives the product metric in a neighborhood of $(P, Q) \in M \times N$.
Since the $g_{i \bar j}$ are functions of only $z_1,\ldots,z_m$ and the $h_{i \bar j}$ are functions of only $z_{m+1},\ldots,z_{m+n}$, we obtain
\begin{equation*}
R_{i \bar j k \bar l} = \begin{cases}
                       -\frac{\partial^2 g_{i \bar j}}{\partial z_k\partial \bar z _l}+\sum_{p,q=1}^m g^{p \bar q}\frac{\partial g_{i \bar p}}{\partial z_k}\frac{\partial g_{q \bar j}}{\partial \bar z_l}, & 1 \leq i,j,k,l \leq m \\
                       \\ -\frac{\partial^2 h_{i \bar j}}{\partial z_k\partial \bar z _l}+\sum_{p,q=m+1}^{m+n} h^{p \bar q}\frac{\partial h_{i \bar p}}{\partial z_k}\frac{\partial h_{q \bar j}}{\partial \bar z_l}, & m+1 \leq i,j,k,l \leq m+n \\
                      \\ 0, & \text{otherwise}.
                     \end{cases}
\end{equation*}\par
Let $\xi=\sum_{i=1}^{m+n} \xi_{i} \frac{\partial}{\partial z_i}$ be a  unit tangent vector in $T_{(P,Q)}(M\times N)$. Then the holomorphic sectional curvature on $M \times N$ along $\xi$ is
\begin{align*}
  K(\xi) = & \; 2\sum_{i,j,k,l=1}^m \bigg(-\frac{\partial^2 g_{i \bar j}}{\partial z_k\partial \bar z _l}+\sum_{p,q=1}^m g^{p \bar q}\frac{\partial g_{i \bar p}}{\partial z_k}\frac{\partial g_{q \bar j}}{\partial \bar z_l}\bigg)\xi_i\bar{\xi}_j\xi_k\bar{\xi}_l \\
           & + 2\sum_{i,j,k,l=m+1}^{m+n} \bigg(-\frac{\partial^2 h_{i \bar j}}{\partial z_k\partial \bar z _l}+\sum_{p,q=m+1}^{m+n} h^{p \bar q}\frac{\partial h_{i \bar p}}{\partial z_k}\frac{\partial h_{q \bar j}}{\partial \bar z_l}\bigg)\xi_i\bar{\xi}_j\xi_k\bar{\xi}_l.
\end{align*}
The two sums on the right hand side above are the numerators of the holomorphic sectional curvatures on $M$ and $N$ with respect to the tangent vectors $(\xi_1,\ldots,\xi_m) \in T_P M$ and $(\xi_{m+1},\ldots,\xi_{m+n}) \in T_Q N$, respectively, both of which are positive. Thus,
$$K(\xi) > 0.$$\par
In order to find the pinching constant, we need to take into consideration the (non-zero) norms of $(\xi_1,\ldots,\xi_m) \in T_P M$ and $(\xi_{m+1},\ldots,\xi_{m+n}) \in T_Q N$ with respect to the respective metrics in the two spaces, as follows:
\begin{align*}
K(\xi) = & \; \sum_{i,k,j,l=1}^{m} 2R_{i \bar j k \bar l} \xi_i \bar \xi_j \xi_k \bar \xi_l + \sum_{i,k,j,l=m+1}^{m+n} 2R_{i \bar j k \bar l} \xi_i \bar \xi_j \xi_k \bar \xi_l \\
         = & \; \frac{\sum_{i,k,j,l=1}^{m} 2R_{i \bar j k \bar l} \xi_i \bar \xi_j \xi_k \bar \xi_l}{\sum_{i,j,k,l=1}^m g_{i \bar j}g_{k \bar l} \xi_i \bar \xi_j \xi_k \bar\xi_l} \cdot \sum_{i,j,k,l=1}^m g_{i \bar j}g_{k \bar l} \xi_i \bar \xi_j \xi_k \bar\xi_l \\
           & + \frac{\sum_{i,k,j,l=m+1}^{m+n} 2R_{i \bar j k \bar l} \xi \bar \xi_j \xi_k \bar \xi_l}{\sum_{i,j,k,l=m+1}^{m+n} h_{i \bar j}h_{k \bar l} \xi_i \bar \xi_j \xi_k \bar\xi_l} \cdot \sum_{i,j,k,l=m+1}^{m+n} h_{i \bar j}h_{k \bar l} \xi_i \bar \xi_j \xi_k \bar\xi_l \\
         = & \; K_M \cdot y^2 + K_N \cdot (1-y)^2,
\end{align*}
where $K_M$ is the holomorphic sectional curvature of $M$ along $(\xi_1,\ldots,\xi_m)$, $K_N$ the holomorphic sectional curvature of $N$ along $(\xi_{m+1},\ldots,\xi_{m+n})$ and $y=\sum_{i,j}^m g_{i \bar j} \xi_i \bar \xi_j$.\\ Since $\xi$ is a unit tangent vector in $T_{(P,Q)}(M \times N)$, i.e., $\sum_{i,j=1}^m g_{i \bar j} \xi_i \bar \xi_j + \sum_{i,j=m+1}^{m+n} h_{i \bar j} \xi_i \bar \xi_j = 1$, we have $$\sum_{i,j=m+1}^{m+n} h_{i \bar j} \xi_i \bar \xi_j = 1 - \sum_{i,j=1}^m g_{i \bar j} \xi_i \bar \xi_j = 1-y.$$
Furthermore, the assumption
$$kc_M \leq K_M\leq k \; \; \; \text{  and  } \; \; \; kc_N \leq K_N \leq k$$
provides the following inequality:
$$F(y) := kc_M y^2+kc_N(1-y)^2 \leq K_My^2+K_N(1-y)^2 \leq ky^2+k(1-y)^2 =: \widetilde{F}(y).$$
Finally, elementary calculus yields $$\min_{0\leq y \leq 1}{F(y)} = k\frac{c_M c_N}{c_M+c_N}$$ and $$\max_{0 \leq y \leq 1} {\widetilde{F}(y)} = k.$$
In particular, $$k\frac{c_M c_N}{c_M+c_N} \leq K(\xi) \leq k,$$
and the pinching constant for the holomorphic sectional curvature on the product space is obtained as $$c_{M \times N} = \frac{\inf_{\xi}K(\xi)}{\sup_{\xi}K(\xi)} = \frac{c_M c_N}{c_M+c_N}.$$

\end{document}